\numberwithin{equation}{section}
\newtheorem{thm}{Theorem}[section]
\newtheorem{lem}[thm]{Lemma}
\newtheorem{defn}[thm]{Definition}
\newcommand{\sinv}{\piinv_\text{b}}
\newcommand{\s}{\pi_\text{b}}
\newcommand{\sbar}{\bar\pi_\text{b}}
\newcommand{\sinvbar}{\bar\pi^{-1}_\text{b}}
\newcommand{\ra}{\rightarrow}
\newcommand{\wt}{\tilde}
\newcommand{\Balg}{\mathscr{B}}
\newcommand{\A}{\mathcal{A}}
\newcommand{\piinv}{\pi^{-1}}
\newtheorem*{ack*}{Acknowledgment}
\begin{document}

%\date{}
\title{Computing degree and class degree}
\author{Mahsa Allahbakhshi}

\address{Mahsa Allahbakhshi, Centro de Modelamiento Matematico, Universidad de Chile,
Av. Blanco Encalada 2120, Piso 7, Santiago de Chile}
\email{mallahbakhshi@dim.uchile.cl}

\thanks{The author was supported by FONDECYT project 3120137.}
\keywords{factor codes, sofic shifts, measures of relative maximal entropy, transition classes, degree, class degree}
%\subjclass[2010]{primary: 37B10; Secondary: 37A35}
\maketitle
\begin{abstract}
Let $\pi$ be a factor code from a one dimensional shift of finite type $X$ onto an irreducible sofic shift $Y$. If $\pi$ is finite-to-one then the number of preimages of a typical point in $Y$ is an invariant called the degree of $\pi$. In this paper we present an algorithm to compute this invariant. The generalized notion of the degree when $\pi$ is not limited to finite-to-one factor codes, is called the class degree of $\pi$. The class degree of a code is defined to be the number of transition classes over a typical point of $Y$ and is invariant under topological conjugacy. We show that the class degree is computable.
\end{abstract} 

\section{Introduction}
One source of inspiration in symbolic dynamics comes from storage systems and transmission in computer science. For example sofic shifts are analogous to regular languages in automata theory, so a sofic shift and its cover are natural models for information storage and transmission. As a result, starting with a presentation of a dynamical system, there are known algorithms constructed to compute some kind of object from such presentation. Given a sofic shift, Coven and Paul constructed a finite procedure to obtain a finite-to-one sofic cover~\cite{CovP77}. There is an algorithm to determine whether two graphs present the same sofic shift~\cite{lm}. Kim and Roush showed that the shift equivalence of sofic systems is decidable~\cite{KimR90}.%Based on a technique in automata theory called the state minimization algorithm one can construct the minimal right resolving presentation of a sofic~\cite{HopU,lm}.

In this work, starting from a sofic shift and its finite-to-one cover, we present an algorithm to compute the number of preimages of a typical point of the sofic. Moreover, we show that in the case of having an infinite-to-one cover, an analogous object can be computed in finitely many steps. 

Given a factor code $\pi$ from a one-dimensional shift of finite type $X$ to a sofic shift $Y$, when $\pi$ is finite-to-one there is a quantity assigned to $\pi$ called the \emph{degree} of $\pi$. The degree of a finite-to-one code is defined to be the minimal number of $\pi$-preimages of the points in $Y$. One can show that the number of preimages of every transitive point in $Y$ is exactly the degree of $\pi$. The degree of a finite-to-one code is widely-studied and known to be invariant under recoding~\cite{lm}. In the first section of this paper we present an algorithm to compute this invariant.

When $\pi:X\to Y$ is not limited to be finite-to-one an analogous of the degree, called the \emph{class degree}, is defined to be the minimal number of transition classes (always finite) over the points in $Y$. The definition of a transition class is motivated by communicating classes in Markov chains. Roughly speaking, two preimages $x$ and $\bar x$ of a point $y$ in $Y$ lie in the same equivalence class, \emph{transition class}, if one can find a preimage $z$ of $y$ which is equal to $x$ up to an arbitrarily large given positive coordinate and right asymptotic to $\bar x$ and vice versa. When $\pi$ is finite-to-one then the degree and the class degree of $\pi$ match. One can also show that the class degree is invariant under topological conjugacy and the number of transition classes over any transitive point of $Y$ is exactly the class degree of $\pi$. One of the main applications of the class degree is bounding the number of measures of relative maximal entropy~\cite{aq}. Such 
measures have applications in 
information theory, computing Hausdorff dimensions and functions of Markov chains \cite{black,BoyTun,BurRos,GatPerexp,karlinfo}. In the second section of this paper we show that the class degree is computable.

\section{Background}
Throughout this paper, $X$ is a one-dimensional shift of finite type (SFT) with the shift transformation $T$. The alphabet of $X$ is denoted by $\A(X)$ and the $\sigma$-algebra on $X$ generated by cylinder sets is denoted by $\Balg_X$.  A triple $(X,Y,\pi)$ is called a \textbf{factor triple} when $\pi:X\to Y$ is a continuous shift-commuting map (factor code) from an SFT $X$ onto a subshift $Y$ (sofic shift $Y$). When $\pi$ is a one-block factor code induced by a symbol-to-symbol map $\s:\A(X)\to\A(Y)$ we naturally extend $\s$ to blocks in $\Balg_X$ (b stands for block). 
When $\pi$ is a finite-to-one factor code there is a uniform upper bound on the number of
pre-images of points in $Y$. The minimal number of pre-images of points in $Y$ is called the \textbf{degree} of the code and is denoted by $d_\pi$.

\begin{defn}
 We say two factor triples $(X,Y,\pi)$ and $(\wt X,\wt Y,\wt\pi)$ are \textbf{conjugate} if $X$ is conjugate to $\wt X$ under a conjugacy $\phi$, $Y$ is conjugate to $\wt Y$ under a conjugacy $\psi$, and $\wt\pi\circ\phi=\psi\circ\pi$. 
\end{defn}

\begin{thm}~\cite{lm}\label{thm:magicsymbol}
 Let $(X,Y,\pi)$ be a factor triple. There is a factor
triple $(\wt{X},\wt{Y},\wt\pi)$ conjugate to
$(X,Y,\pi)$ such that $\wt{X}$ is one-step and $\tilde\pi$ is one-block. %with a magic symbol.
\end{thm}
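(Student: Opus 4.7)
The plan is to build the conjugate triple by passing to a sufficiently high block presentation of $X$ and keeping $Y$ essentially unchanged. First I would invoke the Curtis--Hedlund--Lyndon theorem (or the definition of a factor code in symbolic dynamics) to write $\pi$ as a block code: there exist non-negative integers $m,n$ and a map $\Phi:\B_{m+n+1}(X)\to\A(Y)$ such that $(\pi x)_i=\Phi(x_{i-m}\cdots x_{i+n})$ for every $x\in X$ and every $i\in\Z$. Since $X$ is an SFT, it is also $k$-step for some $k\geq 0$, meaning its forbidden set can be taken to consist of blocks of length $k+1$.

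Next I would choose an integer $N\geq\max(k+1,m+n+1)$ and form the $N$-block presentation $\tilde X:=X^{[N]}$, whose alphabet is $\B_N(X)$ and whose allowed transitions are given by overlapping $N$-blocks that share their last, respectively first, $N-1$ letters and come from an allowed $(N+1)$-block of $X$. The standard higher-block conjugacy $\phi:X\to\tilde X$, defined by $(\phi x)_i=x_{i}x_{i+1}\cdots x_{i+N-1}$ (or a symmetric variant), is a topological conjugacy. Because $N\geq k+1$, every constraint of $X$ is encoded by a $2$-block of $\tilde X$, so $\tilde X$ is indeed a one-step SFT; this is the routine but central verification.

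I would then set $\tilde Y:=Y$ and $\psi:=\mathrm{id}_Y$, and define $\tilde\pi:=\pi\circ\phi^{-1}:\tilde X\to\tilde Y$. The relation $\tilde\pi\circ\phi=\psi\circ\pi$ is automatic, so $(\tilde X,\tilde Y,\tilde\pi)$ is conjugate to $(X,Y,\pi)$ in the sense of the preceding definition. It remains to check that $\tilde\pi$ is one-block: given a symbol $a=a_0a_1\cdots a_{N-1}\in\A(\tilde X)$, the image $\tilde\pi(\tilde x)_i$ depends only on $x_{i-m}\cdots x_{i+n}$, which, since $N\geq m+n+1$, is a subword of $\tilde x_j$ for an appropriate index $j$ near $i$; after aligning the higher-block indexing with the window of $\Phi$ (a matter of shifting $\phi$ by a constant), $\tilde\pi$ is induced by a symbol-to-symbol map $\tilde\s:\A(\tilde X)\to\A(\tilde Y)$.

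There is no serious obstacle here; the only points requiring care are bookkeeping items, namely choosing the alignment in $\phi$ so that the window of $\Phi$ sits inside a single higher-block letter, and verifying that $\phi$ really is a conjugacy onto the sofic image $X^{[N]}$ rather than onto a larger shift space. Both are standard and are treated in detail in~\cite{lm}.
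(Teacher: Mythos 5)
Your argument is correct and is exactly the standard higher-block recoding from \cite{lm}, which is all the paper itself offers here (the theorem is stated without proof, by citation). The one step deserving the care you already flag is the alignment of $\phi$ so that the $(m+n+1)$-window of $\Phi$ lies inside a single symbol of $\A(\wt X)$; with that, nothing is missing.
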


\begin{thm}~\cite{lm}\label{invdeg}
 Given two conjugate factor triples $(X,Y,\pi)$ and $(\wt{X},\wt{Y},\wt\pi)$, we have $d_\pi=d_{\wt\pi}$.
\end{thm}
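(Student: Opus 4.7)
The plan is to chase the commutative diagram $\wt\pi\circ\phi=\psi\circ\pi$ and observe that conjugacies are bijections, so they preserve preimage cardinalities.

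First I would rewrite the hypothesis as $\wt\pi=\psi\circ\pi\circ\phi^{-1}$, which is valid since $\phi$ and $\psi$ are conjugacies and in particular invertible. Then for an arbitrary point $\wt y\in\wt Y$ I would compute
\[
\wt\pi^{-1}(\wt y)=\phi\bigl(\pi^{-1}(\psi^{-1}(\wt y))\bigr).
\]
Because $\phi\colon X\to\wt X$ is a bijection, applying it to a set preserves cardinality, so
\[
|\wt\pi^{-1}(\wt y)|=|\pi^{-1}(\psi^{-1}(\wt y))|.
\]

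Next I would let $\wt y$ range over $\wt Y$. Since $\psi\colon Y\to\wt Y$ is a bijection, the point $y:=\psi^{-1}(\wt y)$ ranges over all of $Y$, and this correspondence is one-to-one and onto. Therefore the multiset of cardinalities $\{|\wt\pi^{-1}(\wt y)|:\wt y\in\wt Y\}$ coincides with $\{|\pi^{-1}(y)|:y\in Y\}$. Taking the minimum of each side yields $d_{\wt\pi}=d_\pi$. (As a by-product, $\wt\pi$ is finite-to-one iff $\pi$ is, and in fact the uniform upper bound on preimage cardinalities is the same for both codes.)

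There is no real obstacle here: the entire content of the statement is that conjugacies are bijections on both the domain and codomain, so any quantity defined in terms of preimage cardinalities is automatically preserved. The only mild care needed is to use $\phi^{-1}$ and $\psi^{-1}$ correctly when inverting the commutative square; once that is written down, the equality of minima is immediate.
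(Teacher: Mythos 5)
Your proof is correct. The paper itself gives no argument for this statement (it is quoted from Lind--Marcus), but with the definition of degree used here --- $d_\pi$ is the minimum of $|\pi^{-1}(y)|$ over all $y\in Y$ --- your diagram chase is exactly the complete and expected argument: inverting the square to get $\wt\pi=\psi\circ\pi\circ\phi^{-1}$, identifying $\wt\pi^{-1}(\wt y)=\phi\bigl(\pi^{-1}(\psi^{-1}(\wt y))\bigr)$, and using that $\phi$ and $\psi$ are bijections. The only caveat worth flagging is that in the cited source the degree is introduced via the number of preimages of a doubly transitive point (and the real content there is that this agrees with the minimal preimage count); relative to the definition stated in this paper, however, nothing beyond your bijection argument is required.
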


\begin{thm}\label{thm:samenumber}~\cite{lm}
Let $\pi$ be a finite-to-one factor code from an SFT $X$ onto an
irreducible sofic shift $Y$. Then every transitive point of $Y$ has exactly $d_{\pi}$
preimages. 
\end{thm}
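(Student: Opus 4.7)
The plan is first to reduce to a convenient presentation, then to invoke the classical magic-word construction for finite-to-one codes, and finally to apply it at a transitive point of $Y$. By Theorem~\ref{thm:magicsymbol} I may replace $(X,Y,\pi)$ with a conjugate factor triple in which $X$ is one-step and $\pi$ is one-block, since any conjugacy carries transitive points to transitive points and, by Theorem~\ref{invdeg}, preserves the degree. I assume this reduction henceforth.

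The central tool I would next invoke is the magic-word construction of~\cite{lm}. The finite-to-one hypothesis, applied to a one-block code on a one-step SFT, yields a word $w$ in the language of $Y$ together with a coordinate $j\in\{1,\ldots,|w|\}$ enjoying the following synchronization properties: the set $\sinvbar(w)$ of preimage blocks has cardinality exactly $d_\pi$; the symbols $\{u_j:u\in\sinvbar(w)\}$ are pairwise distinct; and whenever $w$ occurs at some position $i$ in a point $y'\in Y$, each preimage $x\in\piinv(y')$ is uniquely determined by the symbol $x_{i+j-1}$. Establishing these properties is the main obstacle; the standard route minimizes $|\sinvbar(w)|$ over the language of $Y$ and then uses the one-step SFT structure together with finite-to-oneness to extract a coordinate that pins down the bi-infinite preimage. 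I would import this as input from~\cite{lm}.

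With the magic word in hand, the conclusion is quick. Let $y\in Y$ be a transitive point. Since $Y$ is irreducible and the orbit of $y$ is dense in $Y$, the word $w$ occurs somewhere in $y$, say at position $i$. Then the map $x\mapsto x_{i+j-1}$ from $\piinv(y)$ to $\A(X)$ is injective by the last synchronization property above, with image contained in the $d_\pi$-element set $\{u_j:u\in\sinvbar(w)\}$, so $|\piinv(y)|\le d_\pi$. The reverse inequality $|\piinv(y)|\ge d_\pi$ is immediate from the definition of $d_\pi$ as the minimum over $y\in Y$ of $|\piinv(y)|$, giving $|\piinv(y)|=d_\pi$.
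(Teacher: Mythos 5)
The paper itself offers no proof of this statement---it is quoted from~\cite{lm}---so the comparison below is with the classical magic-word argument that the citation points to. Your overall strategy (reduce to a one-block code on a one-step SFT, then synchronize preimages at occurrences of a magic word) is the right one, but the package of properties you import from~\cite{lm} is stronger than what a magic word actually satisfies, and the crucial one is false as stated. First, a magic word $w$ with magic coordinate $j$ satisfies $|\sinv(w)_j|=d_\pi$, i.e.\ the set of \emph{symbols} occurring at coordinate $j$ among preimage blocks has size $d_\pi$ (this is Theorem~\ref{thm:degmag}); it need not satisfy $|\sinv(w)|=d_\pi$, since several preimage blocks may share the symbol at the magic coordinate while differing elsewhere, and in particular the map $u\mapsto u_j$ on preimage blocks need not be injective. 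More seriously, the synchronization property you rely on --- that a \emph{single} occurrence of $w$ at position $i$ in an arbitrary point $y'$ forces each preimage to be determined by its symbol at the magic position --- does not hold. Two distinct preimages of $y'$ can agree at that position: by the no-diamond property of finite-to-one one-block codes their agreement set is an interval containing it, and nothing prevents them from diverging to the left or to the right of that interval. For a finite-to-one code that is neither left- nor right-closing (such codes exist), no finite word synchronizes from a single occurrence.

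The classical proof repairs this by exploiting the recurrence of the magic word in the point: one first shows, using the minimality of $d^*_\pi$ together with the absence of diamonds, that between two occurrences of $w$ the induced relation on the $d_\pi$-element set $\sinv(w)_j$ is a bijection, so two preimages agreeing at one magic position agree at all magic positions, and hence agree everywhere once occurrences of $w$ accumulate at both $+\infty$ and $-\infty$. This is exactly why the statement in~\cite{lm} is phrased for doubly transitive points. Your argument uses only one occurrence of $w$, which yields only that the preimage symbols at one coordinate lie in a $d_\pi$-element set, not that $x\mapsto x_{i+j-1}$ is injective on $\piinv(y)$; to close the gap you need the bijection lemma and occurrences of $w$ in both directions. (Your final step, $|\piinv(y)|\geq d_\pi$ from the definition of $d_\pi$ as a minimum over points of $Y$, is fine.)
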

Given a one-block factor code $\pi$, above every $Y$-block $W$ there is a set of $X$-blocks $U$ which are sent to $W$ by $\s$; i.e., $\s(U)=W$. Given $0\leq i <|W|$, define
$$\sinv(W)_i=\{a\in\A(X)\colon \exists W' \text{ with }\s(W')=W,\,W'_i=a\}$$ and $$d^*_{\pi}=\min\{|\sinv(W)_i|:W\in\mathscr L(Y),\, 0\leq i< |W|\}.$$ 

\begin{thm}\label{thm:degmag}\cite{lm}
Let $\pi$ be a finite-to-one one-block factor code from an SFT $X$ onto an irreducible sofic shift $Y$. Then $d^*_{\pi}=d_\pi.$ 
\end{thm}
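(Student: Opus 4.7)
The plan is to prove the equality by a magic-word argument, establishing both inequalities via a single well chosen minimizer. Fix a pair $(W,i)$ that achieves the minimum in the definition of $d^*_\pi$ and set $A:=\sinv(W)_i$, so $|A|=d^*_\pi$. Since $Y$ is irreducible, I pick a transitive point $y\in Y$ in which $W$ appears at some coordinate $j$; by Theorem~\ref{thm:samenumber}, $y$ has exactly $d_\pi$ preimages in $X$. The key object is the evaluation map
$$\rho\colon\piinv(y)\to A,\qquad \rho(x)=x_{j+i},$$
which is well defined since $x_{[j,j+|W|-1]}\in\sinv(W)$ for every $x\in\piinv(y)$.

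First I establish $d_\pi\ge d^*_\pi$ by showing $\rho$ is surjective. For each $a\in A$ and $n\ge 0$, let $V_n:=y_{[j-n,\,j+|W|-1+n]}$, which extends $W$ and places the coordinate $j+i$ at position $n+i$ of $V_n$. The restriction inclusion $\sinv(V_n)_{n+i}\subseteq\sinv(W)_i=A$, combined with the minimality of $|A|$, forces equality. So there is an $X$-block $U^{(n)}$ over $V_n$ with $U^{(n)}_{n+i}=a$. Extending each $U^{(n)}$ to a point of $X$ and passing to a convergent subsequence yields $x\in\piinv(y)$ with $\pi(x)=y$ and $x_{j+i}=a$. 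Surjectivity of $\rho$ then gives $d_\pi=|\piinv(y)|\ge|A|=d^*_\pi$.

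Next I sketch why $\rho$ can be made injective, yielding the reverse inequality. If $\rho$ is not injective at $(W,i)$, there are distinct $x,\bar x\in\piinv(y)$ agreeing at $j+i$ but differing at some coordinate $k$. I would enlarge $W$ to a $Y$-block $W'$ enclosing this discrepancy and examine the preimage-letter sets $\sinv(W')_{i'}$ at the various coordinates $i'$ inside $W'$. The finite-to-one hypothesis---equivalent, by~\cite{lm}, to the absence of ``diamonds'' in the labeled graph presenting $\pi$---should then force some $|\sinv(W')_{i'}|$ to be strictly smaller than $d^*_\pi$, contradicting minimality. After such refinements $\rho$ is a bijection, so $d_\pi=|A|=d^*_\pi$.

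The main obstacle is this injectivity step: extracting from the failure of injectivity a concrete extension $W'$ and position $i'$ at which the preimage-letter fiber strictly shrinks below $d^*_\pi$. This is the content of the classical magic-word lemma in~\cite{lm}; it requires careful tracking of how two distinct preimages sharing the magic letter propagate in the presenting graph of $X$, and the finite-to-one hypothesis enters here essentially via the diamond-free characterization.
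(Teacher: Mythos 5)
The paper does not prove this statement; it is quoted from~\cite{lm} as background, so there is no internal proof to compare against. Judged on its own terms, your argument is half complete. The surjectivity half is correct: for a transitive $y$ containing the minimizing block $W$ at position $j$, the nested sets $\sinv(V_n)_{n+i}$ are subsets of $A$ of cardinality at least $d^*_\pi=|A|$ by minimality of $d^*_\pi$, hence all equal to $A$, and the compactness argument correctly upgrades this to genuine preimage points, giving $d_\pi=|\piinv(y)|\ge d^*_\pi$ via Theorem~\ref{thm:samenumber}.

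The injectivity half, which is the substance of the theorem, has a genuine gap, and the mechanism you propose for closing it does not work as described. You suggest that two distinct preimages $x,\bar x$ agreeing at the magic coordinate $j+i$ should force some set $\sinv(W')_{i'}$ to have strictly fewer than $d^*_\pi$ elements. But no such set can ever have fewer than $d^*_\pi$ elements---that is precisely the definition of $d^*_\pi$ as a minimum over all blocks and coordinates---and, more to the point, a disagreement of $x$ and $\bar x$ at a coordinate $k$ only adds symbols to the fiber $\sinv(W')_{k-j}$; it never shrinks any fiber. The classical argument is global rather than local: one uses two-sided transitivity of $y$ to place occurrences of the magic block on both sides of any coordinate where two preimages disagree, and the diamond-free characterization of finite-to-one codes then shows that two distinct preimages agreeing at magic coordinates on both sides of a disagreement would yield two distinct equally labeled paths with the same initial and terminal symbols. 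Combined with a counting argument across consecutive occurrences of the magic block (the induced relation on $A\times A$ has both projections onto $A$ and is forced to be the graph of a bijection), this gives $|\piinv(y)|\le|A|$. Your sketch correctly identifies where the finite-to-one hypothesis must enter, but it aims the contradiction at the wrong quantity, so the inequality $d_\pi\le d^*_\pi$ is not established as written.
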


Given a one-block factor code $\pi:X\to Y$, a \textbf{magic block} is a block $W$ such that
$d(W,i)=d^*_{\pi}$  for some $0\leq i<|W|$. Such an index $i$ is
called a \textbf{magic coordinate} of $W$. A factor code $\pi$ has a
\textbf{magic symbol} if there is a magic block of $\pi$ of length $1$. 

The class degree defined below is a quantity analogous to the degree which is defined in the general case when $\pi$ is not only limited to be finite-to-one. 
\begin{defn}\label{defn:equiv}
Suppose $(X,Y,\pi)$ is a factor triple and $x,\,x'\in X$. There is a \textbf{transition} from $x$ to
$x'$ denoted by $x\to x'$ if for each integer $n$, there is a point $v$ in $X$ so that
\begin{enumerate}
\item $\pi(v)=\pi(x)=\pi(x')$, and
\item $v_{-\infty}^n=x_{-\infty}^n,~~ v^{\infty}_i=x'^{\infty}_i$ for some $i\geq n$.
\end{enumerate}
 Write $x\sim x'$, and say $x$ and $x'$ are in the same (equivalence) \textbf{transition class} if $x\ra x'$ and $x'\ra x$. The minimal number of transition classes over points of $Y$ is called the \textbf{class degree} of $\pi$ and denoted by $c_\pi$.

\end{defn}

\begin{thm}\label{invclass}~\cite{aq}
 Given two conjugate factor triples $(X,Y,\pi)$ and $(\wt{X},\wt{Y},\wt\pi)$, we have $c_\pi=c_{\wt\pi}$.
\end{thm}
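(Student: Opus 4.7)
The plan is to show that the conjugacy $\phi\colon X\to\wt X$ carries the equivalence relation $\sim$ over $y\in Y$ to the corresponding equivalence relation over $\psi(y)\in\wt Y$, so that the number of transition classes above every point is preserved. Since $\psi\colon Y\to\wt Y$ is a bijection and $c_\pi$, $c_{\wt\pi}$ are defined as minima of these counts, this will yield the equality.

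First I would verify the fibrewise bijection. From $\wt\pi\circ\phi=\psi\circ\pi$ one gets $\phi(\pi^{-1}(y))=\wt\pi^{-1}(\psi(y))$ for every $y\in Y$, so $\phi$ restricts to a bijection between these two fibres. The substantive step is to show that $\phi$ preserves the arrow relation $\to$ from Definition~\ref{defn:equiv}. Since $\phi$ is a sliding block code it has some memory $m\geq 0$ and anticipation $a\geq 0$, meaning $\phi(z)_k$ depends only on $z_{k-m}^{k+a}$. Suppose $x\to x'$ in $X$, and fix an arbitrary integer $n'$. Apply the definition of $x\to x'$ with $n:=n'+a$: we get $v\in X$ with $\pi(v)=\pi(x)=\pi(x')$, $v_{-\infty}^{n}=x_{-\infty}^{n}$, and $v_i^\infty={x'}_i^\infty$ for some $i\geq n$. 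Then $\phi(v)_{-\infty}^{n-a}=\phi(x)_{-\infty}^{n-a}$ and $\phi(v)_{i+m}^\infty=\phi(x')_{i+m}^\infty$, and clearly $i+m\geq n\geq n-a=n'$. Moreover, applying $\psi$ to $\pi(v)=\pi(x)=\pi(x')$ gives $\wt\pi(\phi(v))=\wt\pi(\phi(x))=\wt\pi(\phi(x'))$. Hence $\phi(v)$ is a witness for $\phi(x)\to\phi(x')$ at $n'$; since $n'$ was arbitrary, $\phi(x)\to\phi(x')$ in $\wt X$. The same argument applied to $\phi^{-1}$ (another sliding block code) gives the reverse implication, so $x\sim x'$ if and only if $\phi(x)\sim\phi(x')$.

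Combining the two steps, $\phi$ induces a bijection between the transition classes of $\pi$ over $y$ and those of $\wt\pi$ over $\psi(y)$, so these two numbers agree for every $y$. Taking the minimum over $y\in Y$ (equivalently over $\psi(y)\in\wt Y$) gives $c_\pi=c_{\wt\pi}$. The only real obstacle is the bookkeeping for the memory/anticipation of $\phi$; the point is that the quantifier ``for every integer $n$'' in the definition of $\to$ lets us enlarge $n$ enough to absorb the window of $\phi$, after which $\phi$ transports witnesses to witnesses.
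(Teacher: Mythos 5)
Your argument is correct. The paper itself gives no proof of this theorem --- it is quoted from \cite{aq} --- and your proof is essentially the standard one used there: check that the intertwining relation $\wt\pi\circ\phi=\psi\circ\pi$ makes $\phi$ a fibrewise bijection, and use the ``for each integer $n$'' quantifier in Definition~\ref{defn:equiv} to absorb the memory and anticipation of the sliding block code $\phi$ so that witnesses of $x\to x'$ are carried to witnesses of $\phi(x)\to\phi(x')$, whence transition classes over $y$ biject with those over $\psi(y)$.
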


\begin{thm}\label{thm:classdegree}~\cite{aq}
Let $\pi$ be a one-block factor code from a one-step SFT $X$ to a sofic shift $Y$. The number of transition classes over a right transitive point of $y$ is exactly the class degree.
\end{thm}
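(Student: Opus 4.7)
The bound that at least $c_\pi$ transition classes sit above any $y \in Y$ is immediate from the definition of $c_\pi$ as a minimum, so I concentrate on the reverse bound for a right transitive $y$: there are at most $c_\pi$ transition classes over $y$.

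I would fix a reference point $y^\ast \in Y$ attaining the minimum and class representatives $x^{(1)}, \dots, x^{(c_\pi)}$ above $y^\ast$. The aim is to build a well-defined injection $\Phi : \piinv(y)/\!\sim\; \hookrightarrow \{1, \dots, c_\pi\}$. The main mechanism is that right transitivity produces, for every $N$, a position $k_N$ with $k_N - N \to \infty$ at which $y$ exhibits the central block $y^\ast_{[-N,N]}$; that is, $y_{[k_N - N,\,k_N + N]} = y^\ast_{[-N,N]}$. Because $X$ is one-step and $\pi$ is one-block, given any preimage $z$ of $y$ I can attempt to replace $z_{[k_N - N + 1,\, k_N + N - 1]}$ by the shifted block $T^{-k_N}(x^{(j)})_{[k_N - N + 1,\, k_N + N - 1]}$, obtaining a new admissible $X$-point (still a preimage of $y$, since the projection agrees with $y$ on the window) provided the one-step transitions at the two seams are compatible with $z$. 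A pigeonhole over the finite set of indices $j$ produces, at each $N$, at least one admissible $j = j(z, N)$; a further pigeonhole over $N$ yields a stable index $j_z$ shared along a subsequence $N_\ell \to \infty$. Set $\Phi([z]) = j_z$.

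Well-definedness and injectivity follow by tracking how splicing interacts with the equivalence $\sim$. For well-definedness, two preimages $z, z'$ of $y$ in the same class over $y$ admit, by definition of $\sim$, witnesses merging their one-sided tails, which can be parlayed through a common splicing window into the same index. For injectivity, suppose $z, z'$ are $\sim$-equivalent over $y$ but $j_z \neq j_{z'}$; reversing the splicing at each $k_{N_\ell}$ extracts preimages of $y^\ast$ inheriting the equivalence, which contradicts $[x^{(j_z)}] \neq [x^{(j_{z'})}]$ in the class decomposition over $y^\ast$.

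The main obstacle is establishing that some $j$ admits an admissible two-sided splice at each $N$: one must show that the $x^{(j)}$'s jointly cover all admissible symbol pairs at the seams over $y^\ast_{[-N, N]}$, and then match $z$'s seam symbols to one of them. This is where the one-step SFT and one-block code hypothesis together with the fact that the $x^{(j)}$'s exhaust the transition classes over $y^\ast$ must be used decisively. A secondary technical point is that the definition of $\sim$ requires a witness $v$ for \emph{every} integer $n$, not merely for arbitrarily large $n$; so the sequence of splices $\{v^{(\ell)}\}$ must be converted, via compactness in $X$ combined with another pigeonhole, into witnesses valid uniformly at each individual $n$, in both directions.
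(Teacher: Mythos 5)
The paper itself gives no proof of this statement (it is imported from \cite{aq}), so I assess your sketch on its own terms; it has two genuine gaps, and they sit exactly where the content of the theorem lies. First, the step you flag as ``the main obstacle'' is not a technicality to be deferred: it is false in general that the class representatives $x^{(1)},\dots,x^{(c_\pi)}$ over $y^{*}$ ``jointly cover all admissible symbol pairs at the seams.'' The set $\{x_i : x\in\piinv(y^{*})\}$ can be a proper subset of $\sinv(y^{*}_i)$, and even when it is not, there is no reason any single $x^{(j)}$ should match the two seam symbols of a given preimage $z$ of $y$ simultaneously; a pigeonhole over $j$ can only select among splices already known to be admissible, it cannot create one. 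The correct finitary object is the transition block of Definition~\ref{defn:TB}: a block $W$ with a set $M$ of symbols at one interior coordinate such that \emph{every} preimage block of $W$ can be rerouted through $M$ \emph{while keeping both of its endpoints fixed}. It is this endpoint-preserving routability, not agreement with a fixed family of representatives, that makes a splice at an occurrence of $W$ in $y$ legal for every preimage of $y$; and the existence of such a block with $|M|=c_\pi$ is precisely the nontrivial companion statement $c^{*}_\pi=c_\pi$.

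Second, the injection $\Phi$ is never shown to be injective, and injectivity is the inequality you need. To get at most $c_\pi$ classes over $y$ you must prove: if $j_z=j_{z'}$ then $z\sim z'$. Both of your closing paragraphs (the one labelled well-definedness and the one labelled injectivity) argue the opposite implication, that $z\sim z'$ forces $j_z=j_{z'}$ --- the second is merely its contrapositive. The missing implication is where the work is: given two preimages of $y$ that route through the same symbol $a\in M$ at the same coordinate inside an occurrence of a transition block, one concatenates the left half of one rerouted preimage block with the right half of the other to produce the witness $v$ of Definition~\ref{defn:equiv}; since right transitivity supplies occurrences of $W$ in $y$ at positions tending to $+\infty$, each occurrence to the right of a given $n$ yields a witness for that $n$ directly, which also disposes of your ``secondary technical point'' without any compactness argument. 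I would restructure the proof around Definition~\ref{defn:TB} and the identity $c^{*}_\pi=c_\pi$ rather than around splicing against representatives over an extremal point $y^{*}$.
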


Theorem~\ref{thm:classdegree}, in below, provides a finitary characterization of the class degree.

\begin{defn}\label{defn:TB}
Let $\pi:X\ra Y$ be a one-block factor code from a one-step SFT $X$ to a
sofic shift $Y$ and let $W=W_0\dots W_p$ be a block of $Y$. Let $0<n<p$ and let $M$ be a subset of $\piinv_\text{b}(W)_n$. We say $U\in\piinv_\text{b}(W)$ is \textbf{routable} through $a\in M$ at time $n$ if there is a block $U'\in\piinv_\text{b}(W)$ with
$U'_0=U_0,~U'_{n}=a$, and $U'_{p}=U_{p}$. A triple $(W,n,M)$ is called a
\textbf{transition block} of $\pi$ if every block in $\piinv_\text{b}(W)$ is routable through a symbol of $M$ at time $n$. The cardinality of the set $M$ is called the \textbf{depth} of the
transition block $(W,n,M)$.

 Let
$$c^*_{\pi}=\min\{|M|\colon (W,n,M)\textrm{ is a transition block of $\pi$}\}.$$
A \textbf{minimal transition block} of $\pi$ is a
transition block of depth $c^*_\pi$.
\end{defn}

\begin{thm}\label{thm:classdegree}~\cite{aq}
Let $\pi$ be a one-block factor code from a one-step SFT $X$ to a sofic shift $Y$. Then $c^*_{\pi}=c_\pi$.
\end{thm}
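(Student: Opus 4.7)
The plan is to prove the two inequalities $c_\pi \leq c^*_\pi$ and $c^*_\pi \leq c_\pi$ separately, bridging them via the previous theorem that every right-transitive point of $Y$ has exactly $c_\pi$ transition classes. For the first inequality, fix a minimal transition block $(W,n,M)$ with $|M|=c^*_\pi$, and pick $y\in Y$ right-transitive, so that $W$ occurs in $y$ at infinitely many positions $p_1<p_2<\cdots$. For each $x\in \piinv(y)$ and each $k$, let $A_k(x)\subseteq M$ be the nonempty set of symbols through which $x_{[p_k,\,p_k+|W|-1]}$ is routable at time $n$. The key splicing step is this: if $a\in A_k(x)\cap A_k(x')$, then rerouting $x$'s and $x'$'s blocks through $a$ at coordinate $p_k+n$ and gluing them there (which is valid because $X$ is one-step) produces a preimage $v$ equal to $x$ on $(-\infty,p_k)$ and to $x'$ on $[p_k+|W|,\infty)$. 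Consequently, whenever $A_k(x)\cap A_k(x')\neq\emptyset$ for infinitely many $k$, I conclude $x\sim x'$. Taking representatives $x^1,\ldots,x^{c_\pi}$ of the $c_\pi$ classes over $y$, a pigeonhole argument on the finite set $(2^M\setminus\{\emptyset\})^{c_\pi}$ yields an infinite subsequence of $k$'s along which the tuple $(A_k(x^1),\ldots,A_k(x^{c_\pi}))$ is constant, say $(S_1,\ldots,S_{c_\pi})$. Distinctness of the classes forces $S_i\cap S_j=\emptyset$ for $i\neq j$, so the $S_i$ are pairwise disjoint nonempty subsets of $M$, giving $c_\pi\leq |M|=c^*_\pi$.

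\textbf{Showing $c^*_\pi \leq c_\pi$.} Pick $y_0\in Y$ attaining $c_\pi$ transition classes with representatives $x^1,\ldots,x^{c_\pi}$, and set $M=\{x^i_0 : 1\leq i\leq c_\pi\}$, so $|M|\leq c_\pi$. I claim that $(y_0[-N,N],\,N,\,M)$ is a transition block for all sufficiently large $N$. Suppose not; then for arbitrarily large $N$ there is $U^N\in \sinv(y_0[-N,N])$ not routable through any element of $M$ at time $N$. Extend $U^N$ to a preimage $\tilde x^N\in \piinv(y_0)$, pass to a subsequential limit $\tilde x^N\to x^*$, and pick $j$ with $x^*\sim x^j$. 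To contradict non-routability, I would build $v\in \piinv(y_0)$ with $v_0=x^j_0\in M$ and $v=x^*$ on $(-\infty,-N]$ and on $[N,\infty)$; splicing $v$ with $\tilde x^N$ at coordinates where they agree (available for large $N$ along the subsequence by convergence) would then produce a preimage block of $y_0[-N,N]$ sharing the endpoints of $U^N$ but having $x^j_0$ in the middle, the desired contradiction.

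\textbf{Main obstacle.} The hard part is constructing such a $v$: the bare definitions of $x^*\to x^j$ and $x^j\to x^*$ give no a priori control on where the transitions complete, yet I need both transitions to occur within a bounded window around $0$ so that $v$ matches $x^*$ outside. I would address this by establishing, or extracting from \cite{aq}, a bounded-transition lemma asserting that in the one-step, one-block setting, $x\sim x'$ can be witnessed by preimages whose transition windows have length uniformly bounded in position; this is obtained by iterating the transition definition and exploiting the finite-state structure of $X$. Equipped with this lemma, the compactness-splicing contradiction in the second part goes through cleanly, completing the proof.
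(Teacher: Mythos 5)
First, a point of comparison: the paper does not actually prove this theorem --- it is quoted from \cite{aq} --- so there is no in-paper proof to measure your argument against, and it must be judged on its own. Your first inequality, $c_\pi\le c^*_\pi$, is correct and complete: splicing two preimages through a common routing symbol at an occurrence of $W$ in a right transitive point is legal because $X$ is one-step and the three pieces overlap in single symbols, the resulting point witnesses transitions in both directions when the intersection $A_k(x)\cap A_k(x')$ is nonempty for infinitely many $k$, and the pigeonhole on the tuples $(A_k(x^1),\dots,A_k(x^{c_\pi}))$ then forces the $S_i$ to be pairwise disjoint nonempty subsets of $M$. This is the standard argument for that direction.

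The second inequality, however, is not proved, and the gap is exactly where the content of the theorem lies. You defer the crux to a ``bounded-transition lemma'' that you neither state precisely nor establish; the definition of $x\to x'$ lets you prescribe where a transition begins but gives no control over where it completes, so nothing guarantees that a preimage of $y_0$ can be rerouted through $x^j_0$ at coordinate $0$ while still matching prescribed symbols at coordinates $\pm N$. Even granting some uniformity, the splicing you describe does not close: your $v$ is permitted to differ from $x^*$ on all of $(-N,N)$, whereas the convergence $\tilde x^N\to x^*$ only gives agreement on a central window $[-m_N,m_N]$ with no control of $m_N$ relative to $N$; consequently $v$ and $\tilde x^N$ need not agree anywhere in $[-N,N]$, and at the endpoints one has $v_{\pm N}=x^*_{\pm N}$, which need not equal $U^N_{\pm N}$. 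There is also no justification for taking $M$ to be the $0$th coordinates of arbitrarily chosen class representatives. The argument of \cite{aq} runs the other way around: transition blocks exist trivially (e.g.\ $(W,n,\sinv(W)_n)$), so one may take a \emph{minimal} one and use minimality to show that distinct symbols of $M$ determine distinct transition classes over a transitive point containing $W$; combined with your first half this pins the number of classes at $c^*_\pi$. As written, your proposal establishes only $c_\pi\le c^*_\pi$.
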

The following theorem shows the relation between the degree and the class degree of a finite-to-one factor code.
\begin{thm}\cite{aq}
Let $\pi:X\ra Y$ be a finite-to-one factor code from a SFT $X$ to an
irreducible sofic shift $Y$. Then $c_\pi=d_\pi$.
\end{thm}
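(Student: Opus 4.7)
My strategy is to prove both $c_\pi\le d_\pi$ and $d_\pi\le c_\pi$ by working with a single right transitive point $y\in Y$. By Theorem~\ref{thm:magicsymbol} and the conjugacy invariance of both quantities (Theorems~\ref{invdeg} and \ref{invclass}), I may assume $\pi$ is one-block and $X$ is one-step. Theorem~\ref{thm:samenumber} says $|\pi^{-1}(y)|=d_\pi$, while the theorem identifying $c_\pi$ as the number of transition classes over a right transitive point gives that $\pi^{-1}(y)$ splits into exactly $c_\pi$ nonempty transition classes. The inequality $c_\pi\le d_\pi$ is then immediate, since transition classes partition the fiber into nonempty blocks.

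For the reverse inequality I aim to show that any two distinct preimages of $y$ lie in distinct transition classes. Take $x\neq x'$ in $\pi^{-1}(y)$ and suppose, for contradiction, that $x\to x'$. Applying Definition~\ref{defn:equiv} at each $n\in\Z$ produces a preimage $v_n\in\pi^{-1}(y)$ with $v_n|_{(-\infty,n]}=x|_{(-\infty,n]}$ and $v_n|_{[i_n,\infty)}=x'|_{[i_n,\infty)}$ for some $i_n\ge n$. Since $\pi^{-1}(y)$ has only $d_\pi$ elements, the fibers of $n\mapsto v_n$ partition $\Z$ into finitely many pieces, so at least one fiber $S$ must be unbounded above (otherwise $\Z$ would be a finite union of sets bounded above). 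Fix $v$ with $v_n=v$ for every $n\in S$. The relation $v=x$ on $(-\infty,n]$ for arbitrarily large $n\in S$ forces $v=x$ on all of $\Z$; picking any single $n\in S$ then gives $x=v=x'$ on the right tail $[i_n,\infty)$.

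The crux, and the step where I use finite-to-one-ness crucially, is to derive a contradiction from the observation that distinct preimages of $y$ share an infinite right tail. I would invoke the standard bi-closing property of finite-to-one factor codes between SFTs (cf.~\cite{lm}): asymptotic agreement of preimages with the same image forces full equality, because such codes have a bounded recoding window. This yields $x=x'$, contradicting the choice of $x,x'$. As a self-contained alternative, one can fix a magic block $W$ via Theorem~\ref{thm:degmag}, locate an occurrence of $W$ in $y$ inside the shared right tail (whose existence is guaranteed by right-transitivity of $y$), and observe that $x$ and $x'$ then coincide at that magic coordinate; a bijection argument modeled on the proof of Theorem~\ref{thm:samenumber}, identifying a preimage of $y$ by its value at a single magic coordinate, then pins the two preimages down to be equal globally. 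Either way, $x=x'$ is a contradiction, so the $d_\pi$ preimages of $y$ lie in $d_\pi$ distinct transition classes, giving $d_\pi\le c_\pi$ and hence $c_\pi=d_\pi$.
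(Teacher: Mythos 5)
The paper does not actually prove this statement; it is quoted from~\cite{aq} without proof, so there is no in-paper argument to compare against. Judged on its own terms, your overall architecture is sound: reducing to a one-block code on a one-step SFT, fixing a right transitive point $y$ so that $|\piinv(y)|=d_\pi$ (Theorem~\ref{thm:samenumber}) while the fiber splits into exactly $c_\pi$ transition classes (Theorem~\ref{thm:classdegree}), getting $c_\pi\le d_\pi$ for free, and then reducing the reverse inequality, via the pigeonhole on the finite fiber applied to the points $v_n$ of Definition~\ref{defn:equiv}, to the claim that two right asymptotic preimages of $y$ coincide. That pigeonhole step is correct and is the right key move.

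The genuine problem is your primary justification of the final step: finite-to-one factor codes between SFTs are \emph{not} in general bi-closing. The assertion ``right asymptotic preimages of the same point are equal'' for arbitrary points is precisely left-closingness, and left-closing (like right-closing) is strictly stronger than finite-to-one; there are standard examples of finite-to-one codes with no closing property and no bounded recoding window of the kind you invoke. So that route, as stated, fails. Fortunately your ``self-contained alternative'' is the correct repair and should be promoted to the main argument: since $y$ is right transitive, a magic block $W$ with magic coordinate $c$ occurs at some position $k$ inside the shared right tail; for every central block of $y$ containing that occurrence the fiber of symbols over position $k+c$ has exactly $d_\pi$ elements (at most $d_\pi$ because it sits inside $\sinv(W)_c$, at least $d_\pi$ by minimality in Theorem~\ref{thm:degmag}), so by the nested-intersection/compactness argument the set $\{z_{k+c}\colon z\in\piinv(y)\}$ has exactly $d_\pi$ elements and $z\mapsto z_{k+c}$ is injective on the $d_\pi$-element fiber; since $x_{k+c}=x'_{k+c}$ this forces $x=x'$. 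With that substitution the proof is complete; as written, the bi-closing claim is an error that happens not to be load-bearing only because you supplied the alternative.
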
 

\section{Degree algorithm} \label{sec:degalg}
In this section we present an algorithm to compute the degree of a finite-to-one factor code. By Theorems~\ref{thm:magicsymbol} and~\ref{invdeg}, without loss of generality, we may assume $\pi$ is a one-block factor code defined on a one-step SFT.

Let $X$ be a one-step SFT with alphabet $\A(X)=\{a_1,\dots,a_i\}$ and adjacency matrix $I$. Let $\pi:X\to Y$ be a finite-to-one one-block factor code from $X$ to a sofic shift $Y$ with the alphabet $\A(Y)=\{b_1,\dots,b_j\}$. Make two graphs $G$ and $G'$ as follows. 

Let $G$ and $G'$ both have the same vertex set $$\mathcal V=\bigcup_{b\in\A(Y)}\mathscr P \{\sinv (b)\}=\{A_1,\dots,A_m\}$$ where $\mathscr P\{\sinv (b)\}$ stands for the power set of $\{\sinv (b)\}$. For what we need later, divide the vertex set $\mathcal V$ into two parts $\mathcal U$ and $\mathcal V-\mathcal U$ where $\mathcal U=\{\sinv (b)\colon b\in\A(Y)\}$. Form the adjacency matrix $M$ of $G$ and the adjacency matrix $M'$ of $G'$ as follows. Let $A,A'\in\mathcal V$, then $A\subseteq\sinv(b)$ and $A'\subseteq\sinv(b')$ for some $b,b'$ in $\A(Y)$. Say $M_{AA'}=1$ if the following conditions hold.
\begin{enumerate}
\item $bb'$ is a block of $Y$,
\item $A'$ is exactly the set of all symbols $a'$ in $\sinv(b')$ such that $aa'$ is a block of $X$ for some $a$ in $A$.
\end{enumerate}
Otherwise $M_{AA'}=0$. Say $M'_{AA'}=1$ if we have
\begin{enumerate}
\item $bb'$ is a block of $Y$,
\item $A$ is exactly the set of all symbols $a$ in $\sinv(b)$ such that $aa'$ is a block of $X$ for some $a'$ in $A'$.
\end{enumerate}
Otherwise $M'_{AA'}=0$. Consider the following subsets of the vertex set $\mathcal V$,
$$S=\{A\in\mathcal V \colon \text{ there is a finite path in } G \text{ from }B \text{ to } A \text{ for some }B\in\mathcal U\},$$ and 
$$S'=\{A\in\mathcal V \colon\text{ there is a finite path in } G' \text{ from } A \text{ to } B \text{ for some }B\in\mathcal U\}.$$ 
\begin{thm}~\label{thm:degalg}
Using above notations, we have $$d_\pi=\min_{\substack{A\in S\\A'\in S'}}\{|A\cap A'|\colon A\cap A'\neq\emptyset\}.$$
\end{thm}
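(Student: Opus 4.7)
The plan is to invoke Theorem~\ref{thm:degmag} to reformulate $d_\pi$ as $d^{*}_\pi=\min_{W,i}|\sinv(W)_i|$, and then to show that every $\sinv(W)_i$ arises as an intersection $A\cap A'$ for some matching pair $A\in S$, $A'\in S'$, and conversely every nonempty such intersection equals $\sinv(V\cdot W)_{|V|-1}$ for a suitable $Y$-block $V\cdot W$.

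First I would prove a characterization lemma: a vertex $A\in\mathcal V$ lies in $S$ if and only if $A=\sinv(V)_{|V|-1}$ for some $V\in\mathscr L(Y)$, and dually $A\in S'$ if and only if $A=\sinv(W)_0$ for some $W\in\mathscr L(Y)$. These are proved by induction on path length, using that the unique $M$-successor of $\sinv(V)_{|V|-1}$ along a $Y$-extension $b'$ is precisely $\sinv(Vb')_{|V|}$ (and analogously for $M'$), which is immediate from the edge rule defining $G$ and $G'$, together with the fact that the starting vertices in $\mathcal U$ are themselves full sets $\sinv(b)$.

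The more delicate step is a gluing identity. Take $A\in S$ and $A'\in S'$, both contained in $\sinv(b)$ for the same $b\in\A(Y)$, with $A=\sinv(V)_{|V|-1}$ (so $V$ ends in $b$) and $A'=\sinv(W)_0$ (so $W$ begins with $b$). A symbol $a\in A\cap A'$ is exactly one admitting a back-extension $U\in\sinv(V)$ with $U_{|V|-1}=a$ and a forward-extension $U'\in\sinv(W)$ with $U'_0=a$. Since $X$ is a one-step SFT, these two blocks can be spliced at their common symbol to yield a single block of $X$ whose $\s$-image is the overlap-concatenation $V\cdot W:=V_0\cdots V_{|V|-1}W_1\cdots W_{|W|-1}$; conversely, any lift of $V\cdot W$ restricts to a back-extension and a forward-extension. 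Hence when $A\cap A'\neq\emptyset$ we have $V\cdot W\in\mathscr L(Y)$ and
\[
A\cap A'=\sinv(V\cdot W)_{|V|-1},
\]
so $|A\cap A'|\ge d^{*}_\pi=d_\pi$ by Theorem~\ref{thm:degmag}.

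For the matching upper bound I would pick, again via Theorem~\ref{thm:degmag}, a block $W\in\mathscr L(Y)$ and a coordinate $0\le i<|W|$ with $|\sinv(W)_i|=d_\pi$, and set $A=\sinv(W_0\cdots W_i)_i$ and $A'=\sinv(W_i\cdots W_{|W|-1})_0$; the characterization lemma puts $A\in S$ and $A'\in S'$, and the gluing identity gives $A\cap A'=\sinv(W)_i$, realizing the minimum. Boundary coordinates $i=0$ and $i=|W|-1$ are absorbed by allowing zero-length paths in $G$ or $G'$ at vertices of $\mathcal U$. The main obstacle I anticipate is executing the gluing step cleanly: verifying that the spliced block genuinely lies in $X$ (where the one-step hypothesis is essential) and that $A\cap A'=\sinv(V\cdot W)_{|V|-1}$ is an equality rather than merely an inclusion. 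Once this is in hand, both inequalities of the claimed identity follow by symmetric applications of the characterization lemma.
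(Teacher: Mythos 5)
Your proposal is correct and follows essentially the same route as the paper: your characterization lemma is exactly the paper's ``key feature'' (the unique walk in $G$, resp.\ $G'$, whose terminal, resp.\ initial, vertex is $\sinv(W)_i$), your gluing identity is the step the paper invokes when intersecting those vertices, and your upper bound via a magic block and Theorem~\ref{thm:degmag} is the paper's second inequality verbatim. If anything, you make explicit the one-step splicing argument and the zero-length-path convention that the paper leaves tacit.
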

\begin{proof}
Note that $G$ and $G'$ are finite directed graphs. Let $X_G$ and $X_{G'}$ be the shift spaces represented by $G$ and $G'$ accordingly. Let $\sbar$ be the map from $\mathcal V$ to $Y$ taking $A\in\mathcal V$ to $\pi(a)$ for some $a\in A$ (note that $\sbar(A)$ is independent of $a\in A$). Then $\sbar$ induces a one-block code $\bar\pi_G:X_G\to Y$ and a one-block code $\bar\pi_{G'}:X_{G'}\to Y$.\\

The key feature of these two graphs is the following. For any block $$W=W_0\dots W_k$$ of $Y$ there is a unique walk $$U=U_0\dots U_k$$ in $G$ with the following properties.
\begin{enumerate} 
\item $U_0=\sinv(W_0)=\sinvbar(W_0)$.
\item $\bar\pi_G(U)=W$.
\item $U_k=\{a\in\A(X)\colon \exists B\in\piinv(W)\text{ and }B_k=a\}$.
\end{enumerate}
 
 Similarly there is a unique walk $$V=V_0\dots V_k$$ in $G'$ with the following properties.
\begin{enumerate}
\item $V_k=\sinv(W_k)=\sinvbar(W_k)$.
\item $\bar\pi_{G'}(V)=W$.
\item $V_0=\{a\in\A(X)\colon \exists\, B\in\piinv(W)\text{ and }B_0=a\}$.
\end{enumerate}
Let $A\in S$, $A'\in S'$, and $A\cap A'\neq\emptyset$. Let $U=U_0\dots U_c$ be a walk in $G$ where $U_0\in\mathcal U$ and $U_c=A$. Let $V=V_0\dots V_k$ be a walk in $G'$ where $V_0=A'$ and $V_k\in\mathcal U$. Since $A\cap A'\neq\emptyset$ we have $\bar\pi_G(A)=\bar\pi_{G'}(A')$, and consequently $\bar\pi_G(U)\bar\pi_{G'}(V)=W_0\dots W_{c+k}=W$ is a block of $Y$. By the key feature discussed in the above paragraph, we have $$A\cap A'=\{a\in\A(X)\colon W'\in\sinv(W),\text{ and }W'_c=a\}.$$
Since by definition $$d_\pi=\min_{\substack{W\text{ block of }Y\\ 0\leq i\leq|W|}}|\{a\in\A(X)\colon \exists\, W'\in\piinv(W), W'_i=a\}|,$$ we have $$d_\pi\leq \min_{\substack{A\in S\\A'\in S'}}\{|A\cap A'|\colon A\cap A'\neq \emptyset\}.$$
Now we prove the inequality in the other direction; that is we seek to show that $$d_\pi\geq \min_{\substack{A\in S\\A'\in S'}}\{|A\cap A'|\colon A\cap A'\neq \emptyset\}.$$
Let $W=W_0\dots W_k$ be a magic block of $\pi$ with a magic coordinate $0\leq c\leq k$. Let $$D=\{a\in\A(X)\colon W'\in\sinv(W),\,W'_c=a\}.$$ Consider $U=W_0\dots W_c$ and $V=W_c \dots W_k$. Let $\bar U=\bar U_0\dots \bar U_c$ be the unique path in $G$ which maps to $U$ and $\bar U_0=\sinv(W_0)$. Then $$\bar U_C=\{a\in\A(X)\colon F\in\sinv(U),\,F_c=a\}.$$ Let $\bar V=\bar V_c\dots \bar V_k$ be a unique path in $G'$ which maps to $V$ and $\bar V_k=\sinv(W_k)$. Then $$\bar V_c=\{a\in\A(X)\colon G\in\sinv(V),\,G_0=a\}.$$ It follows that $$\bar U_c\cap\bar V_c=\{a\in\A(X)\colon W'\in\sinv(W),\,W'_c=a\}=D.$$ Therefore $$d\geq \min_{\substack{A\in S\\A'\in S'}}\{|A\cap A'|\colon A\cap A'\neq \emptyset\},$$ which completes the proof.
\end{proof}

\section{Class degree is computable}
In this section we find an upper bound on the length of a minimal transition block of a factor triple. Then by Theorem~\ref{thm:classdegree} it follows that the class degree of a factor code is computable. Without loss of generality, by Theorem~\ref{thm:magicsymbol} and Theorem~\ref{invclass} we may assume that the SFT is one-step and the factor code is one-block.

\begin{thm}\label{thm:length}
 Let $\pi:X\to Y$ be a one-block factor code from a one-step SFT $X$ to a sofic shift $Y$. Let $f=\max\{|\sinv(w)|\colon w\in\A(Y)\}$. There is a minimal transition block $(W,n,M)$ of $\pi$ with $$|W|\leq |\A(Y)|\times 2^{f^2+f+1}.$$
\end{thm}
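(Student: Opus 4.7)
The plan is to argue by contradiction. Fix a shortest minimal transition block $(W,n,M)$ of $\pi$, with $W=W_0\ldots W_p$ and $|M|=c^*_\pi$, and suppose that $|W|>|\A(Y)|\cdot 2^{f^2+f+1}$. I would exhibit a strictly shorter minimal transition block by a pigeonhole-based cut-and-paste, contradicting the choice of $W$.

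At each coordinate $i$ of $W$ I attach a state $\sigma(i)$ designed so that equality of states at two coordinates on the same side of $n$ permits deleting the stretch of $W$ between them. For $i\leq n$ set $\sigma(i)=(W_i,L_i)$ where
\[
L_i=\{(a,c)\in\sinv(W_0)\times\sinv(W_i):\exists\,U\in\sinv(W_0\ldots W_i),\ U_0=a,\ U_i=c\},
\]
and for $i\geq n$ set $\sigma(i)=(W_i,R_i)$ where
\[
R_i=\{(c,b)\in\sinv(W_i)\times\sinv(W_p):\exists\,U\in\sinv(W_i\ldots W_p),\ U_0=c,\ U_{p-i}=b\}.
\]
Since every $\sinv(b)$ has cardinality at most $f$, the state at any coordinate takes one of at most $|\A(Y)|\cdot 2^{f^2}$ values. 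Under the standing assumption, $|W|>|\A(Y)|\cdot 2^{f^2+f+1}\geq 2\,|\A(Y)|\cdot 2^{f^2}$, so one of the halves $\{0,\ldots,n\}$, $\{n,\ldots,p\}$ strictly exceeds $|\A(Y)|\cdot 2^{f^2}$ coordinates, and the pigeonhole principle supplies indices $i<j$ on that side with $\sigma(i)=\sigma(j)$; in particular $W_i=W_j$.

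I would then form $W'=W_0\ldots W_iW_{j+1}\ldots W_p$, take $n'=n-(j-i)$ if the duplicate lies on the left and $n'=n$ otherwise, and keep $M'=M$. The splice $W_iW_{j+1}$ coincides with $W_jW_{j+1}$ and is therefore a block of $Y$, so $W'$ is a valid $Y$-block. Applying the one-step SFT recurrence
\[
L_{k+1}=\{(a,c')\colon\exists\,c,\ (a,c)\in L_k,\ \s(c')=W_{k+1},\ cc'\in X\}
\]
(and its backward symmetric form for $R$) together with the equality of states at $i$ and $j$ propagates inductively along the coordinates of $W'$ to yield $L'_{n'}=L_n$ in the left-cut case, or $R'_{n'}=R_n$ in the right-cut case; the unchanged side gives the other relation for free. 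Consequently $\sinv(W')_{n'}=\sinv(W)_n\supseteq M$, and the routability condition of Definition~\ref{defn:TB}, phrased purely in terms of $L_n$, $R_n$, and $M$, transfers verbatim to $(W',n',M)$. Thus $(W',n',M)$ is a minimal transition block with $|W'|<|W|$, contradicting the minimality of $|W|$.

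The main obstacle is the cut-and-paste step: verifying carefully that $\sigma(i)=\sigma(j)$ really does propagate to $L'_{n'}=L_n$ (or $R'_{n'}=R_n$) after the splice, and hence that $(W',n',M)$ remains a transition block of the same depth $c^*_\pi$. The one-step SFT hypothesis and the identity $W_i=W_j$ are used in an essential way: they are precisely what guarantee that the local pair-relation update at the splice position in $W'$ reproduces the original local update in $W$.
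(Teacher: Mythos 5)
Your proof is correct and takes essentially the same approach as the paper's: a pigeonhole on the state (current symbol together with the pairing relation back to the block's endpoint) to find two coordinates between which a stretch of $W$ can be excised, combined with the observation (the paper's Lemmas~\ref{cl:join} and~\ref{cl:transblock}) that the transition-block property depends only on the two pairing relations at time $n$ and on $M$. The only differences are cosmetic: you argue by minimal counterexample rather than iterating the shortening until both halves are short, and your per-side state count $|\A(Y)|\cdot 2^{f^2}$ is in fact slightly sharper than the paper's $|\A(Y)|\cdot 2^{f^2+f}$.
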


We need the following definitions and lemmas to prove Theorem~\ref{thm:length}.

\begin{defn}\label{defn:pairs}
Let $\pi:X\to Y$ be a one-block factor code from a one-step SFT $X$ to a sofic shift $Y$. Given $A,\,B\subseteq\A(X)$ and $\gamma\in\mathscr P(A\times B)$ say $A$ \textbf{pairs with} $B$ {\bf in form} $\gamma$ when there is a block $W=W_0\dots W_n$ of $Y$ with $\sinv(W)_0=A$, $\sinv(W)_n=B$ such that $(a^*,b^*)\in\gamma$ if and only if there is $I\in\sinv(W)$ which begins at $a^*$ and ends at $b^*$; that is, $I_0=a^*$ and $I_n=b^*$.

\end{defn}

Note that $A$ can pair with $B$ in at most $2^{|A\times B|}$ distinct forms. 
\begin{defn}\label{defn:w-pair}
Let $\pi:X\to Y$ be a one-block factor code from a one-step SFT $X$ to a sofic shift $Y$. Given $A,\,B\subseteq\A(X)$ and a block $W=W_0\dots W_n$ of $Y$ with $\sinv(W)_0=A$, $\sinv(W)_n=B$, a $W$-\textbf{paring of} $A,B$, denoted by $P_{W}(AB)$, is the set $\gamma\in\mathscr P(A\times B)$ which contains all pairs $(a^*,b^*)$ such that there is a block in $I\in\sinv(W)$ with $I_0=a^*$ and $I_n=b^*$.
\end{defn}

\begin{lem}\label{cl:join}
  Let $\pi:X\to Y$ be a one-block factor code from a one-step SFT $X$ to a sofic shift $Y$. Let $A,B,C\subseteq\A(X)$. Let $D=D_0\dots D_l$ and $D'=D'_0\dots D'_{l'}$ be two blocks of $Y$ with $\sinv(D)_0=\sinv(D')_0=A$, $\sinv(D)_l=\sinv(D')_{l'}=B$, and $P_{D}(AB)=P_{D'}(AB)$. Let $E=E_0\dots E_s$ and $E'=E'_0\dots E'_{s'}$ be two blocks of $Y$ with $\sinv(E)_0=\sinv(E')_0=B$, $\sinv(E)_{s}=\sinv(E')_{s'}=C$, and $P_{E}(BC)=P_{E'}(BC)$. Then we have $P_{W}(AC)=P_{W'}(AC)$ where $W$ and $W'$ are blocks of $Y$ formed by joining $E$ to $D$ and $E'$ to $D'$ as follows: $W=D_0\dots D_lE_1\dots E_s$ and $W'=D'_0\dots D'_{l'}E'_1\dots E'_{s'}$.
\end{lem}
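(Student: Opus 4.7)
The plan is to show that $P_W(AC)$ is the relational composition of $P_D(AB)$ and $P_E(BC)$---that is, $(a^*,c^*)\in P_W(AC)$ if and only if there exists $b^*\in B$ with $(a^*,b^*)\in P_D(AB)$ and $(b^*,c^*)\in P_E(BC)$---and likewise for $W'$. Once this characterization is established, the hypotheses $P_D(AB)=P_{D'}(AB)$ and $P_E(BC)=P_{E'}(BC)$ immediately give $P_W(AC)=P_{W'}(AC)$, since both sides are described by the same relational composition. Note that $W$ is a legal block of $Y$: taking any $b^*\in B$ (if $B$ is empty the lemma is trivial), the common symbol $D_l=\s(b^*)=E_0$ lets the two $Y$-blocks be glued along their shared letter.

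For the forward direction of the characterization, a preimage $I\in\sinv(W)$ with $I_0=a^*$ and $I_{l+s}=c^*$ decomposes at coordinate $l$: the prefix $I_0\cdots I_l$ is a preimage of $D$ and the suffix $I_l\cdots I_{l+s}$ is a preimage of $E$, so $b^*:=I_l$ lies in $\sinv(D)_l=B$ and certifies both $(a^*,b^*)\in P_D(AB)$ and $(b^*,c^*)\in P_E(BC)$. The backward direction is where the one-step hypothesis on $X$ is used: given $J\in\sinv(D)$ with $J_0=a^*$, $J_l=b^*$ and $K\in\sinv(E)$ with $K_0=b^*$, $K_s=c^*$, the concatenation $J_0\cdots J_l K_1\cdots K_s$ is admissible in $X$ because allowedness is determined by consecutive pairs and the only new pair $J_l K_1=K_0 K_1$ is already allowed in $K$; it maps to $W$ under $\s$ and witnesses $(a^*,c^*)\in P_W(AC)$.

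The one point that deserves an explicit check is that $P_W(AC)$ is a legitimate $W$-pairing in the sense of Definition~\ref{defn:w-pair}, that is, $\sinv(W)_0=A$ and $\sinv(W)_{l+s}=C$. Given $a\in A=\sinv(D)_0$, pick any $J\in\sinv(D)$ beginning with $a$ and set $b^*=J_l\in B=\sinv(E)_0$; extending by some $K\in\sinv(E)$ with $K_0=b^*$ and gluing as above produces a preimage of $W$ starting at $a$. The reverse containment and the symmetric statement for $C$ follow by the same decomposition/gluing, and the analogous assertions hold for $W'$. I do not anticipate any genuine obstacle: the content of the lemma is essentially that composition of pairing relations is well-defined on blocks, with the one-step SFT assumption providing the single-symbol gluing that makes the composition match the naive set-theoretic one.
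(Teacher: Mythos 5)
Your proposal is correct and uses essentially the same argument as the paper: decompose a preimage of the concatenated block at the junction coordinate to get an intermediate symbol $b^*\in B$, and glue preimages back together at that symbol using the one-step property of $X$. The paper packages this as a direct containment $P_W(AC)\subseteq P_{W'}(AC)$ plus symmetry rather than as an explicit relational-composition identity, but the content is identical; your extra check that $\sinv(W)_0=A$ and $\sinv(W)_{l+s}=C$ is a reasonable point the paper leaves implicit.
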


\begin{proof}
 First note that $D_l=E_0$ and $D'_{l'}=E'_0$ and therefore $W$ and $W'$ are legal blocks of $Y$ with $|W|=l+s+1$ and $|W'|=l'+s'+1$. It is enough to show $P_{W}(AC)\subseteq P_{W'}(AC)$. Let $(a^*,c^*)$ be in $P_W(AC)$. This means there is a block $I$ in $\sinv(W)$ which starts at $a^*$ and ends at $c^*$. Since $B=\sinv(W)_{l}$ we have $I_{l}\in B$. Denote $I_{l}$ by $b^*$. It follows that $(a^*,b^*)\in P_D(AB)$ and $(b^*,c^*)\in P_E(BC)$. Hence by assumption, $(a^*,b^*)\in P_{D'}(AB)$ and $(b^*,c^*)\in P_{E'}(BC)$. This means there is a block $U\in\sinv(D')$ with $U_0=a^*$ and $U_{l'}=b^*$, and a block $V\in\sinv(E')$ with $V_0=b^*$ and $V_{s'}=c^*$. Form the block $J$ by joining the blocks $V$ to $U$ as below; $J=U_0\dots U_{l'}V_1\dots V_{s'}$. Clearly $J\in\sinv(W')$. Since $J_0=a^*$ and $J_{l'+s'+1}=c^*$ it follows that $(a^*,c^*)\in P_{W'}(AC)$.
\end{proof}

\begin{lem}\label{cl:transblock}
 Let $(W,n,M)$ be a transition block with $|W|=t+1$. Let $\sinv(W)_0=A$, $\sinv(W)_n=B$, and $\sinv(W)_t=C$ for some $A,B,C\subseteq\A(X)$. Let $W'$ with $|W'|=t'+1$ be another block of $Y$ with $\sinv(W)_0=A$, $\sinv(W)_{n'}=B$ for some $0\leq n<t'$, and $\sinv(W)_{t'}=C$ such that $P_{W_0\dots W_{n}}(AB)=P_{W'_0\dots W'_{n'}}(AB)$ and $P_{W_n\dots W_{t}}(BC)=P_{W'_{n'}\dots W'_{t'}}(BC)$. Then $(W',n',M)$ is a transition block of $\pi$. 
\end{lem}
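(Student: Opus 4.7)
The plan is to recast the transition-block condition of Definition~\ref{defn:TB} purely in the language of the pairings $P_{\cdot}(\cdot\,\cdot)$ and then use Lemma~\ref{cl:join} together with the two pairing equalities in the hypothesis to transfer routability from $W$ to $W'$.

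First I would note that a block $U\in\piinv_\text{b}(W)$ with $U_0=a^*$ and $U_t=c^*$ is routable through $b^*$ at time $n$ exactly when $(a^*,b^*)\in P_{W_0\dots W_n}(AB)$ and $(b^*,c^*)\in P_{W_n\dots W_t}(BC)$: the witness $U'$ splits at coordinate $n$ into a preimage of $W_0\dots W_n$ joining $a^*$ to $b^*$ and a preimage of $W_n\dots W_t$ joining $b^*$ to $c^*$, and conversely any two such half-preimages concatenate legally since $X$ is one-step and they share the symbol $b^*$ at the junction. Consequently, $(W,n,M)$ being a transition block is equivalent to the combinatorial statement that every $(a^*,c^*)\in P_W(AC)$ admits some $b^*\in M$ with $(a^*,b^*)\in P_{W_0\dots W_n}(AB)$ and $(b^*,c^*)\in P_{W_n\dots W_t}(BC)$.

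Next, to verify that $(W',n',M)$ is a transition block, I would take any $U'\in\piinv_\text{b}(W')$, set $a^*=U'_0$ and $c^*=U'_{t'}$, and apply Lemma~\ref{cl:join} to the matching halves $W_0\dots W_n,\,W'_0\dots W'_{n'}$ and $W_n\dots W_t,\,W'_{n'}\dots W'_{t'}$. This yields $P_W(AC)=P_{W'}(AC)$, so $(a^*,c^*)\in P_W(AC)$, and the transition-block property of $(W,n,M)$ furnishes $b^*\in M$ with $(a^*,b^*)\in P_{W_0\dots W_n}(AB)$ and $(b^*,c^*)\in P_{W_n\dots W_t}(BC)$. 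The hypothesised equalities $P_{W_0\dots W_n}(AB)=P_{W'_0\dots W'_{n'}}(AB)$ and $P_{W_n\dots W_t}(BC)=P_{W'_{n'}\dots W'_{t'}}(BC)$ then transfer these memberships to the primed pairings, and by the reformulation above this is precisely the statement that $U'$ is routable through $b^*\in M$ at time $n'$.

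I do not expect a serious obstacle here: once routability is phrased in terms of pairings, Lemma~\ref{cl:join} carries the essential content and what remains is straightforward index bookkeeping plus the one-step legality of concatenating the two half-preimages at their common symbol $b^*$. The only minor care needed is making sure the $A,B,C$ in Lemma~\ref{cl:join} are the same sets on both sides; this is guaranteed by the hypothesis $\sinv(W)_0=\sinv(W')_0=A$, $\sinv(W)_n=\sinv(W')_{n'}=B$, $\sinv(W)_t=\sinv(W')_{t'}=C$.
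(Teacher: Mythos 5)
Your proof is correct and follows essentially the same route as the paper's: apply Lemma~\ref{cl:join} to get $P_W(AC)=P_{W'}(AC)$, pull the routing symbol $b^*\in M$ back from a matching preimage of $W$, and transfer the two pairing memberships across the hypothesised equalities to reassemble a witness over $W'$. Your explicit up-front reformulation of routability in terms of the two half-pairings is a nice clarification of a step the paper leaves implicit, but it is not a different argument.
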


\begin{proof}
 Note that by Lemma~\ref{cl:join}, since $P_{W_0\dots W_{n}}(AB)=P_{W'_0\dots W'_{n'}}(AB)$ and $P_{W_n\dots W_{t}}(BC)=P_{W'_{n'}\dots W'_{t'}}(BC)$, we have $P_W(AC)=P_{W'}(AC)$.
 Suppose $M={b_1,\dots,b_i}$. Let $U\in\sinv(W')$, we need to show that $U$ is routable through a member of $M$. Let $U_0=a^*$ and $U_{t'}=c^*$. Since $(a^*,c^*)\in P_{W'}(AC)$ we have $(a^*,c^*)\in P_W(AC)$. It follows that there is a block $V\in\sinv(W)$ with $V_0=a^*$ and $V_t=c^*$. Since $(W,n,M)$ is a transition block, then block $V$ must be routable through some symbol $b^*\in M$ which implies that $(a^*,b^*)\in P_{W_0\dots W_n}(AB)=P_{W'_0\dots W'_{n'}}(AB)$, and $(b^*,c^*)\in P_{W_n\dots W_t}(BC)=P_{W'_{n'}\dots W'_{t'}}(BC)$. It follows that there is a block $U'\in\sinv(W')$ with $U'_0=a^*$, $U'_{n'}=b^*$, and $U'_{t'}=c^*$, meaning that $U$ is routable through $b^*\in M$.
\end{proof}

\begin{proof}[Proof of Theorem~\ref{thm:length}]
Let $(W,n,M)$ be a minimal transition block of $\pi$ with $|W|=t+1$. Assume $t+1>|\A(Y)|\times 2^{f^2+f+1}$. We construct a transition block $(W',n',M')$ with $|W|\leq |\A(Y)|\times 2^{f^2+f+1}$, but yet $M'=M$. Such transition block has depth $|M|$ and therefore is a minimal transition block. 

Let $\sinv(W)_0=A$, $\sinv(W)_n=B$. Denote $P_{W_0\dots W_n}(AB)$ by $\gamma$. We show that $A$ can pair with $B$ in form $\gamma$ in less than or equal $|\A(Y)|\times 2^{f^2+f}$ numbers of steps.
 
Recall that given any $d\in\A(Y)$ there are at most $2^f$ number of distinct subsets of $\sinv(d)$. Moreover, given $D\subseteq\sinv(d)$, $A$ can pair with $D$ in at most $2^{f^2}$ distinct forms. Therefore, since $n> |\A(Y)|\times 2^{f^2+f}$, 
there is at least one symbol $d\in\A(Y)$ and a subset $D\subseteq\sinv(d)$ such that $d$ occurs in $W$ at two distinct positions $W_k$ and $W_r,k<r\leq n$, with $\sinv(W)_k=\sinv(W)_r=D$, and $P_{W_0\dots W_k}(AD)=P_{W_0\dots W_r}(AD)$. Let $Z=Z_0\dots Z_s$ be the block $W_0\dots W_kW_{r+1}\dots W_n$. Note that by assumption, $W_k=W_r$ which implies that $Z$ is a legal block of $Y$. If $s\leq |\A(Y)|\times 2^{f^2+f}$ then we are done, if not, we repeat the process until $A$ pairs with $D$ in form $\gamma$ in less than $|\A(Y)|\times 2^{f^2+f}$ number of steps. The result is a block $G=G_0\dots G_{n'}$ of $Y$ where $n'\leq |\A(Y)|\times 2^{f^2+f}$, $\sinv(G)_0=A$, $\sinv(G)_{n'}=B$, and $P_{G_0\dots G_{n'}}(AB)=\gamma$.

Now let $\sinv(W)_t=C$, and denote $P_{W_n\dots W_t}(BC)$ by $\delta$. By similar argument $B$ can pair with $C$ in less than or equal $|\A(Y)|\times 2^{f^2+f}$ number of steps; that is, there is a block $H=H_{n}\dots H_{m}$ of $Y$ where $m-n+1\leq |\A(Y)|\times 2^{f^2+f}$, $\sinv(H)_{n}=B$, $\sinv(H)_{m}=C$, and $P_{H_{n}\dots H_{m}}(BC)=\delta$. 

Make the new block $W'=W'_0\dots W'_{t'}$ by joining the two blocks $G$ and $H$ as follows: $W'=G_0\dots G_{n'}H_{n+1}\dots H_{m}$. Note that since $G_{n'}=H_{n}$ the block $W'$ is a legal block of $Y$. Moreover, we have $|W'|\leq |\A(Y)|\times 2^{f^2+f+1}$. Now it is easy to see that $W'$ is a minimal transition block. Note that $\sinv(W')_0=A$, $\sinv(W')_{n'}=B$, $\sinv(W')_{t'}=C$, $P_{W'_0\dots W'_{n'}}(AB)=\gamma$, and $P_{W'_{n'}\dots W'_{t'}}(BC)=\delta$. Lemma~\ref{cl:transblock} implies that $(W',n',M)$ is a transition block of $\pi$, and since its depth is $|M|$ we conclude that it is a minimal transition block of $\pi$. 
\end{proof}

\section{Open question}
 Given a factor triple $(X,Y,\pi)$ since $c_\pi\leq d_\pi$, if $d_\pi=1$ then $c_\pi=1$. Thus the algorithm given in Section~\ref{sec:degalg} will compute the class degree as well. We can actually use the same two graphs $G$ and $G'$ and modify Theorem~\ref{thm:degalg} by applying the upper bound on the length of a minimal transition block stated in Theorem~\ref{thm:length} to write an algorithm to compute the class degree. However, since the bound on Theorem~\ref{thm:length} is growing exponentially with respect to the number of preimages of a symbol in $\A(Y)$, the algorithm could be hopelessly complicated. It would be ideal to find an efficient algorithm for computing the class degree.

\end{document}